\newtheorem{thm}{Theorem}[section]
\newtheorem{pro}[thm]{Proposition}
\newtheorem{lem}[thm]{Lemma}
\newtheorem{cla}[thm]{Claim}
\newtheorem{cor}[thm]{Corollary}
\theoremstyle{definition}
\newtheorem{obs}[thm]{Observation}
\newtheorem{rem}[thm]{Remark}
\newtheorem{exa}[thm]{Example}
\newtheorem{defn}[thm]{Definition}
\newtheorem{conj}[thm]{Conjecture}
\newcommand{\sumlim}{\sum\limits}
\newcommand{\een}{\end{enumerate}}
\newcommand{\blem}{\begin{lem}}
\newcommand{\elem}{\end{lem}}
\newcommand{\bcl}{\begin{cla}}
\newcommand{\ecl}{\end{cla}}
\newcommand{\ethm}{\end{thm}}
\newcommand{\bpr}{\begin{pro}}
\newcommand{\epr}{\end{pro}}
\newcommand{\bco}{\begin{cor}}
\newcommand{\eco}{\end{cor}}
\newcommand{\bcon}{\begin{conj}}
\newcommand{\econ}{\end{conj}}
\newcommand{\bde}{\begin{defn}}
\newcommand{\ede}{\end{defn}}
\newcommand{\bex}{\begin{exa}}
\newcommand{\eexa}{\end{exa}}
\newcommand{\bobs}{\begin{obs}}
\newcommand{\eobs}{\end{obs}}
\newcommand{\bexe}{\begin{exe}}
\newcommand{\eexe}{\end{exe}}
\begin{document}
\title[Congruence B-Orbits of Symmetric Matrices]{Congruence B-Orbits and the Bruhat Poset of Involutions of the Symmetric Group}
\author{Eli Bagno and Yonah Cherniavsky}

\thanks{The second author was supported by
the Swiss National Science Foundation.}
 \address{Jerusalem College of Technology, Department of Mathematics and Computer Science, Ariel University Center of Samaria, Israel}
 \email{bagnoe@jct.ac.il , chrnvsk@gmail.com}

\begin{abstract} We study the poset of Borel congruence classes of symmetric
matrices ordered by containment of closures. We give a combinatorial description of this poset and calculate its rank function. We discuss the relation between this poset and the Bruhat poset of involutions of the symmetric group.
\end{abstract}
\maketitle
\section{Introduction}

A remarkable property of the Bruhat decomposition of $GL_n(\mathbb C)$ (i.e. the decomposition of $GL_n(\mathbb C)$ into double cosets $\left\{B_1\pi B_2\right\}$ where $\pi\in S_n$ , $B_1,B_2\in\mathbb B_n(\mathbb C)$ -- the subgroup
of upper-triangular invertible matrices ) is that the natural order on double cosets (defined by containment of closures) leads to the same poset as the combinatorially defined Bruhat order on permutations of $S_n$ (for $\pi,\sigma\in S_n$, $\pi\leqslant\sigma$ if $\pi$ is a subword of $\sigma$ with respect to the reduced form in Coxeter generators). L.~Renner introduced and developed the beautiful theory of Bruhat decomposition for not necessarily invertible matrices, see~\cite{R} and~\cite{R1}. When the Borel group acts on all the matrices, the double cosets are in bijection with partial permutations which form a so called {\rm rook monoid} $R_n$ which is the finite monoid whose elements are the 0-1 matrices with at most one nonzero entry in each row and column. The group of invertible elements of $R_n$ is isomorphic to the symmetric group $S_n$. Another efficient, combinatorial description of the Bruhat ordering on $R_n$ and a useful, combinatorial formula for the length function on $R_n$ are given by M.~Can and L.~Renner in~\cite{CR}.

The Bruhat poset of involutions of $S_n$ was first studied by F.~Incitti in~\cite{I} from a purely combinatorial point of view. He proved that this poset is graded, calculated the rank function and also showed several other important properties of this poset. \\

In this paper we present a geometric interpretation of this poset and its natural generalization, considering the action of the Borel subgroup on symmetric matrices by
congruence. Denote by $\mathbb B_n(\mathbb C)$ the Borel
subgroup of $GL_n(\mathbb C)$, i.e. the group of invertible
upper-triangular $n\times n$ matrices over the complex numbers.
Denote by $\mathbb S(n, \mathbb C)$ the set of all complex symmetric
$n\times n$ matrices. The congruence action of
$B\in\mathbb B_n(\mathbb C)$ on $S\in\mathbb S(n, \mathbb C)$ is
defined in the following way:
$$S\,\,\longmapsto\,\,B^tSB\,\,.$$
The orbits of this action (to be precisely correct, we must say $S\,\mapsto\,\left(B^{-1}\right)^tSB^{-1}$ to get indeed a group action) are called  the {\it congruence B-orbits}.
It is known that the orbits of this action may be indexed by partial $S_n$-involutions (i.e. symmetric $n\times n$ matrices with at most one 1 in each row and in each column) (see~\cite{S}). Thus, if $\pi$ is such a partial involution, we denote by
$\mathcal C_\pi$ the corresponding congruence B-orbit of symmetric
matrices. The poset of these orbits gives a natural extension of the Bruhat poset of regular (i.e. not partial) involutions of $S_n$. If we restrict this action to the set of invertible symmetric matrices we get a poset of orbits that is isomorphic to the Bruhat poset of involutions of $S_n$ studied by F.~Incitti. \\

Here, we give another view of the rank function of this poset, combining combinatorics with the geometric nature of it. The rank function equals to the dimension of the orbit variety. We give two combinatorial formulas for the rank function of the poset of partial involutions (Theorems~\ref{PosetRankFunction} and~\ref{genincitti}). The result of Incitti that the Bruhat poset of involutions of $S_n$ is graded and his formula for the rank function of this poset follow from our exposition (Corollary~\ref{incittigrfr}).

At the end of the paper we briefly discuss how our view of the rank function can be applied to the non-symmetric case, i.e. how to find  the rank function of the Bruhat poset of all (not necessarily symmetric) partial permutations in a similar way.

\section{Preliminaries}
\subsection{Permutations and partial permutations. The Bruhat order}
The {\rm Bruhat order} on permutations of $S_n$ is defined as follows: $\pi\leqslant\sigma$ if $\pi$ is is a subword of $\sigma$ in Coxeter generators $s_1=(1,2)$, $s_2=(2,3)$,...,$s_{n-1}=(n-1,n)$. It it well studied from various points of view. The {\rm rank function} is the length in Coxeter generators which is exactly the number of inversions in a permutation. A {\rm permutation matrix} is a square matrix which has exactly one 1 in each row and each column while all other entries are zeros. A {\rm partial permutation} is an injective map defined on a subset of $\{1,2,..,n\}$. A {\rm partial permutation matrix} is a square matrix which has at most one 1 at each row and each column and all other entries are zeros. So, if we delete the zero rows and columns from a partial permutation matrix we get a (regular) permutation matrix of smaller size, we will use this view later. See works of L. Renner \cite{R1} and \cite{R} where the Bruhat order on partial permutations is introduced and studied.

\subsection{Partial order on orbits}

\label{OrbOrder}
When an algebraic group acts on a set of matrices, the classical
partial order on the set of all orbits is defined as follows:
$$\mathcal{O}_1 \leq  \mathcal{O}_2\,\,\Longleftrightarrow\,\,\mathcal{O}_1 \subseteq \overline{\mathcal{O}_2}$$
where $\overline{S}$ is the  (Zarisski) closure
of the set $S$.

\begin{rem}
Note that $\mathcal O_1\subseteq\overline{\mathcal
O_2}\Longrightarrow\overline{\mathcal
O_1}\subseteq\overline{\mathcal O_2}$ for any two sets $\mathcal
O_1,\mathcal O_2$.
\end{rem}

\section{Rank-control matrices}

In this section we define the {\it rank control matrix} which will
    turn out to be a key corner in the identification of our poset. We
start with the following definition:

\bde
 Let $X=\left(x_{ij}\right)$ be an
$n\times m$ matrix. For each $1 \leq k \leq n$ and $1 \leq l \leq m$, denote by $X_{k\ell}$ the
upper-left $k\times\ell$ submatrix of $X$. We denote by $R(X)$
the $n\times m$ matrix whose entries
are: $r_{k\ell}=rank\left(X_{k\ell}\right)$ and call it the {\it rank control matrix} of $X$.\ede

It follows from the definitions that for each matrix $X$, the entries of $R(X)$ are
nonnegative integers which do not decrease in rows and columns and
each entry is not greater than its row and column number. If $X$
is symmetric, then $R(X)$ is symmetric as well.

\begin{exa}
$$I_3=\begin{pmatrix}
1 &0 &0\\
0 &1 &0\\
0 &0 &1
\end{pmatrix}\quad,\quad R(I_3)= \begin{pmatrix}
1 &1 &1\\
1 &2 &2\\
1 &2 &3
\end{pmatrix}
.$$
\end{exa}

\begin{rem} This rank-control matrix is similar to the one
introduced by A.~Melnikov \cite{M} when she studied the poset (with
respect to the covering relation given in Definition~\ref{OrbOrder})
of adjoint B-orbits of certain nilpotent strictly upper-triangular
matrices.

The rank control matrix is connected also to the work of Incitti
\cite{I} where regular involutions of $S_n$ are
discussed.
\end{rem}

\bpr\label{ulr} Let $X,Y\in GL_n(\mathbb F)$ be such that $Y=LXB$ for
some invertible lower-triangular matrix $L$ and some Borel (i.e.
invertible upper-triangular) matrix $B$. Denote by $X_{k\ell}$ and
$Y_{k\ell}$ the upper-left $k\times\ell$ submatrices of $X$ and $Y$
respectively. Then for all $1\leqslant k,\ell\leqslant n$
$$rank\left(X_{k\ell}\right)=rank\left(Y_{k\ell}\right)\,.$$
 \epr
\begin{proof}
$$\begin{pmatrix}
L_{kk} &0_{k\times(n-k)}\\
* &*
\end{pmatrix}\begin{pmatrix}
X_{k\ell} &*\\
* &*
\end{pmatrix}\begin{pmatrix}
B_{\ell\ell} &*\\
0_{(n-\ell)\times\ell} &*\end{pmatrix}=\begin{pmatrix}
L_{kk}X_{k\ell}B_{\ell\ell} &*\\
* &*
\end{pmatrix}\,,
$$
and therefore, $Y_{k\ell}=L_{kk}X_{k\ell}B_{\ell\ell}$. The Matrices
$L_{kk} $ and $B_{\ell\ell}$ are invertible, which implies that
$Y_{k\ell}$ and $X_{k\ell}$ have equal ranks.
\end{proof}

The rank control matrices of two permutations can be used to compare
between them in the sense of Bruhat order. This is the reasoning for
the next definition:

 \bde\label{rankcontrolorder} Define the following order on
$n \times m$ matrices with positive integer entries: Let
$P=\left(p_{ij}\right)$ and $Q=\left(q_{ij}\right)$ be two such
matrices.

 Then
$$P\leqslant_{\mathcal R}
Q\,\,\Longleftrightarrow\,\,p_{ij}\leqslant q_{ij}\,\,\textrm{for
all}\,\,i,j\,.$$ \ede

The following lemma appears in another form as Theorem 2.1.5 of
\cite{BB}.

 \blem\label{BrOrdPerm} Denote by $\leqslant_B$
the Bruhat order of $S_n$ and let $\pi,\sigma\in S_n$. Then
$$\pi\leqslant_{\mathcal B}\sigma\quad\Longleftrightarrow\quad
R(\pi)\geqslant_{\mathcal R} R(\sigma)\,.$$ In other words, the
Bruhat order on permutations corresponds to the inverse order of
their rank-control matrices. \qed\elem

\section{Partial permutations, Partial Involutions and Congruence B-Orbits}

\bde\label{PartPerm} A {\rm partial permutation} is an $n\times n$
$(0,1)$-matrix such that each row and each column contains at most
one `1'.  \ede

\bde\label{PartInvol} If a partial permutation matrix is symmetric,
then we call it a {\it partial involution}. \ede

The following easily verified lemma claims that partial permutations
are completely characterized by their rank control matrices.

\blem\label{PermRankComplDef}
For two $n\times n$ partial permutation matrices $\pi,\sigma$ we
have
$$R(\pi)=R(\sigma)\iff\pi=\sigma.$$
 \elem

\begin{rem}
Let $U_n$ be the $n \times n$ upper-triangular matrix with '1's on
the main diagonal and in all upper triangle
and let $\pi$ be any partial permutation. Then
$$R(\pi)=U^t\pi U\,\,.$$
\end{rem}
\begin{thm}\label{OrbParam}
There exists a bijection between the set of congruence B-orbits of
symmetric matrices over $\mathbb C$ and the set of partial
involutions.
\end{thm}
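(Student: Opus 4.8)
The plan is to establish the bijection by showing that every symmetric matrix over $\mathbb{C}$ can be reduced, via a congruence action by an upper-triangular invertible matrix, to a unique partial involution, and that two symmetric matrices lie in the same congruence B-orbit if and only if they reduce to the same partial involution. The key invariant driving the whole argument is the rank-control matrix $R(S)$ introduced above. First I would observe that the congruence action $S \mapsto B^t S B$ preserves the rank-control matrix: applying Proposition~\ref{ulr} with $L = B^t$ (which is lower-triangular and invertible) and the given Borel matrix $B$ shows that $\mathrm{rank}(S_{k\ell}) = \mathrm{rank}((B^tSB)_{k\ell})$ for all $k,\ell$, so $R(S) = R(B^tSB)$. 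Hence $R$ is constant on congruence B-orbits, giving a well-defined map from orbits to rank-control matrices.

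Next I would produce, for each symmetric matrix $S$, a partial involution $\pi$ in its orbit together with the equality $R(S) = R(\pi)$. This is the constructive core: I would run a symmetric Gaussian-elimination procedure, using upper-triangular congruence transformations (which act simultaneously on rows from the left by $B^t$ and on columns from the right by $B$, preserving symmetry at every step) to clear entries and normalize $S$ into a $(0,1)$-matrix with at most one $1$ in each row and column, i.e.\ a partial involution. Since these transformations lie in $\mathbb{B}_n(\mathbb{C})$, the resulting $\pi$ lies in the same orbit as $S$, and by the invariance just established $R(\pi) = R(S)$. This shows the map from orbits to partial involutions (sending the orbit of $S$ to this normal form $\pi$) is surjective onto the set of partial involutions, since every partial involution is already in such normal form and is its own representative.

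To finish, I would prove injectivity: if two symmetric matrices $S, S'$ reduce to partial involutions $\pi, \pi'$ respectively and lie in the same orbit, then $\pi$ and $\pi'$ lie in the same orbit, so $R(\pi) = R(\pi')$ by the invariance of $R$; by Lemma~\ref{PermRankComplDef}, $R(\pi) = R(\pi')$ forces $\pi = \pi'$. Conversely, if $\pi = \pi'$ then $S$ and $S'$ both reduce to the same matrix under Borel congruence, hence lie in the same orbit. This establishes that the assignment ``orbit $\mapsto$ its partial-involution normal form'' is a well-defined bijection onto the set of partial involutions. The main obstacle I anticipate is the normalization step: I must verify carefully that the symmetric elimination over $\mathbb{C}$ can always be carried out using only upper-triangular congruences and that it terminates in a genuine $(0,1)$ partial involution (in particular that one can always scale pivots to $1$ and clear the off-diagonal interactions without destroying symmetry or leaving residual nonzero entries). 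The invariance of $R$ together with Lemma~\ref{PermRankComplDef} then does the rest of the work almost formally, so the combinatorial bookkeeping of the reduction is where the real care is needed.
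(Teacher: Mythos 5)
Your proposal follows essentially the same route as the paper: the paper's own proof is a one-line appeal to a symmetric version of the Gaussian elimination process (citing Theorem 3.2 of \cite{S}), which is exactly your constructive core, and the uniqueness argument you supply via invariance of the rank-control matrix under congruence (Proposition~\ref{ulr}) combined with Lemma~\ref{PermRankComplDef} is the same mechanism the paper uses implicitly and later makes explicit in Proposition~\ref{BrCsRankCont}. The elimination step you flag as the remaining obstacle is precisely the part the paper outsources to \cite{S}, so your outline is correct and at essentially the same level of detail as the paper's.
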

\begin{proof} The proof can be obtained by performing a symmetric version of Gauss elimination process. See Theorem 3.2 in \cite{S} for more details.
\end{proof}
\begin{rem}
Note that the elimination process described above works only over
$\mathbb{C}$. In the case of $\mathbb{R}$, the diagonal entries of a
partial involution matrix (which are its fixed points) belong to
$\{0,1-1\}$.

\end{rem}

\section{The Poset of Congruence B-Orbits of Symmetric Matrices}


Here is a direct consequence of Lemma~\ref{PermRankComplDef} and Proposition~\ref{ulr}.
\bpr \label{BrCsRankCont}  All the matrices of a fixed
congruence B-Orbit share a comon rank-control
matrix. In other words, if $\pi$ is a partial $S_n$-involution, and
$C_{\pi}$ is the congruence B-orbit of symmetric matrices associated
with $\pi$ then
$$
\mathcal C_\pi=\left\{S\in \mathbb S(n,\mathbb
C)\,|\,R(S)=R(\pi)\right\}.
$$
\epr

The following lemma describes the orbits:

\blem\label{CosetClosure} Let $\pi$ be a partial involution and let
$R(\pi)$ be its rank-control matrix.  Then
$$
\overline{\mathcal C_\pi}=\left\{S\in\mathbb S(n,\mathbb
C)\,\,|\,\,R(S)\leqslant_{\mathcal R} R(\pi)\right\}\,.
$$
 \elem
\begin{proof}
This lemma follows from Theorem 15.31 of \cite{MS}. Their exposition differs somewhat from ours as it deals with rectangular,
not necessarily symmetric matrices but the differences can be easily overwhelmed by considering also equations of the form $a_{ij}=a_{ji}$ which are polynomial equations with regard to the entries of a matrix.
\end{proof}

\begin{rem} Over the fields $\mathbb C$ and $\mathbb R$
the closure in Lemma~\ref{CosetClosure} may also be considered with respect to the  metric topology.
\end{rem}
The next corollary follows from Lemma~\ref{CosetClosure} and characterizes the order relation of the poset of B-orbits.
\begin{cor}\label{main} Let $\pi$ and $\sigma$ be partial $S_n$-involutions.
Then
$$
\mathcal C_\pi\leqslant_{\mathcal O}\mathcal C_\sigma\iff R(\pi)
\leqslant_{\mathcal R}R(\sigma)
$$
\end{cor}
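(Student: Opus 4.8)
The plan is to unwind the definition of the order $\leqslant_{\mathcal O}$ on orbits and then substitute the two set-theoretic descriptions already at our disposal. By Definition~\ref{OrbOrder}, the relation $\mathcal C_\pi \leqslant_{\mathcal O} \mathcal C_\sigma$ means exactly $\mathcal C_\pi \subseteq \overline{\mathcal C_\sigma}$, so the whole statement reduces to showing that this containment of the orbit in the closure is equivalent to the entrywise inequality $R(\pi) \leqslant_{\mathcal R} R(\sigma)$.

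For the direction $(\Leftarrow)$, I would fix an arbitrary $S \in \mathcal C_\pi$. Proposition~\ref{BrCsRankCont} gives $R(S) = R(\pi)$, and the hypothesis $R(\pi) \leqslant_{\mathcal R} R(\sigma)$ then yields $R(S) \leqslant_{\mathcal R} R(\sigma)$. By Lemma~\ref{CosetClosure} this places $S$ in $\overline{\mathcal C_\sigma}$. Since $S$ was arbitrary, we conclude $\mathcal C_\pi \subseteq \overline{\mathcal C_\sigma}$, that is, $\mathcal C_\pi \leqslant_{\mathcal O} \mathcal C_\sigma$.

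For $(\Rightarrow)$, the one point to keep in mind is that $\mathcal C_\pi$ is nonempty: indeed $\pi$ is itself a symmetric $(0,1)$-matrix with $R(\pi) = R(\pi)$, so $\pi \in \mathcal C_\pi$ by Proposition~\ref{BrCsRankCont}. Assuming $\mathcal C_\pi \subseteq \overline{\mathcal C_\sigma}$, the element $\pi$ (or any $S \in \mathcal C_\pi$) lies in $\overline{\mathcal C_\sigma}$, so Lemma~\ref{CosetClosure} gives $R(\pi) = R(S) \leqslant_{\mathcal R} R(\sigma)$, which is precisely the desired conclusion.

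There is no serious obstacle here: the corollary is a formal consequence of chaining Proposition~\ref{BrCsRankCont} (the orbit is the level set $\{S : R(S) = R(\pi)\}$) with Lemma~\ref{CosetClosure} (its closure is the sublevel set $\{S : R(S) \leqslant_{\mathcal R} R(\pi)\}$). The only thing worth checking carefully, and the closest thing to a subtlety, is the nonemptiness of $\mathcal C_\pi$ invoked in the forward direction: it guarantees that the containment of the level set in the sublevel set genuinely forces the pointwise inequality of the defining rank-control matrices, rather than holding vacuously.
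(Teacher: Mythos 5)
Your argument is correct and is exactly the one the paper intends: the corollary is stated as an immediate consequence of Lemma~\ref{CosetClosure} (together with Proposition~\ref{BrCsRankCont}), and you have simply written out the two directions that the paper leaves implicit. Your observation about the nonemptiness of $\mathcal C_\pi$ in the forward direction is a sensible point of care, though it does not change the substance of the argument.
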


\section{An example}
In this section we give an example for the poset of B-congruence orbits. We represent each orbit by its
partial involution (see Theorem~\ref{OrbParam}) and write the
rank-control matrix together with each partial involution.
\begin{exa}
This example illustrates the case $n=3$.
$$\xymatrix{& {\left[\begin{matrix}
1 &0 &0\\
0 &1 &0\\
0 &0 &1 \end{matrix}\right]}{\left(\begin{matrix}
1 &1 &1\\
1 &2 &2\\
1 &2 &3 \end{matrix}\right)}& \\
{\left[\begin{matrix}
1 &0 &0\\
0 &1 &0\\
0 &0 &0 \end{matrix}\right]}{\left(\begin{matrix}
1 &1 &1\\
1 &2 &2\\
1 &2 &2 \end{matrix}\right)}\ar@{-}[ur] & {\left[\begin{matrix}
1 &0 &0\\
0 &0 &1\\
0 &1 &0 \end{matrix}\right]}{\left(\begin{matrix}
1 &1 &1\\
1 &1 &2\\
1 &2 &3 \end{matrix}\right)} \ar@{-}[u] &{\left[\begin{matrix}
0 &1 &0\\
1 &0 &0\\
0 &0 &1 \end{matrix}\right]}{\left(\begin{matrix}
0 &1 &1\\
1 &2 &2\\
1 &2 &3 \end{matrix}\right)} \ar@{-}[ul]\\
{\left[\begin{matrix}
1 &0 &0\\
0 &0 &0\\
0 &0 &1 \end{matrix}\right]}{\left(\begin{matrix}
1 &1 &1\\
1 &1 &1\\
1 &1 &2 \end{matrix}\right)}\ar@{-}[u]\ar@{-}[ur] &
{\left[\begin{matrix}
0 &1 &0\\
1 &0 &0\\
0 &0 &0 \end{matrix}\right]}{\left(\begin{matrix}
0 &1 &1\\
1 &2 &2\\
1 &2 &2 \end{matrix}\right)}\ar@{-}[ur]\ar@{-}[ul] &
{\left[\begin{matrix}
0 &0 &1\\
0 &1 &0\\
1 &0 &0 \end{matrix}\right]}{\left(\begin{matrix}
0 &0 &1\\
0 &1 &2\\
1 &2 &3 \end{matrix}\right)}\ar@{-}[ul]\ar@{-}[u]\\
{\left[\begin{matrix}
1 &0 &0\\
0 &0 &0\\
0 &0 &0 \end{matrix}\right]}{\left(\begin{matrix}
1 &1 &1\\
1 &1 &1\\
1 &1 &1 \end{matrix}\right)}\ar@{-}[u] & {\left[\begin{matrix}
0 &0 &0\\
0 &1 &0\\
0 &0 &1 \end{matrix}\right]}{\left(\begin{matrix}
0 &0 &0\\
0 &1 &1\\
0 &1 &2 \end{matrix}\right)}\ar@{-}[ur]\ar@{-}[u]\ar@{-}[ul]&
{\left[\begin{matrix}
0 &0 &1\\
0 &0 &0\\
1 &0 &0 \end{matrix}\right]}{\left(\begin{matrix}
0 &0 &1\\
0 &0 &1\\
1 &1 &2 \end{matrix}\right)}\ar@{-}[ul]\ar@{-}[ull]\ar@{-}[u]\\
{\left[\begin{matrix}
0 &0 &0\\
0 &1 &0\\
0 &0 &0 \end{matrix}\right]}{\left(\begin{matrix}
0 &0 &0\\
0 &1 &1\\
0 &1 &1 \end{matrix}\right)}\ar@{-}[u] \ar@{-}[ur]& &
{\left[\begin{matrix}
0 &0 &0\\
0 &0 &1\\
0 &1 &0 \end{matrix}\right]}{\left(\begin{matrix}
0 &0 &0\\
0 &0 &1\\
0 &1 &2 \end{matrix}\right)}\ar@{-}[u]\ar@{-}[ul]\\
& {\left[\begin{matrix}
0 &0 &0\\
0 &0 &0\\
0 &0 &1 \end{matrix}\right]}{\left(\begin{matrix}
0 &0 &0\\
0 &0 &0\\
0 &0 &1 \end{matrix}\right)}\ar@{-}[ul]\ar@{-}[ur]\\
 & {\left[\begin{matrix}
0 &0 &0\\
0 &0 &0\\
0 &0 &0 \end{matrix}\right]}{\left(\begin{matrix}
0 &0 &0\\
0 &0 &0\\
0 &0 &0 \end{matrix}\right)}\ar@{-}[u]}
$$

\end{exa}

\section{The Rank Function}
\bde A poset $P$ is called {\it graded} (or {\it ranked}) if for every $x,y \in P$, any two maximal chains from $x$ to $y$ have the same length.
\ede
\bpr\label{rankdimrenner} The poset of congruence B-orbits of symmetric matrices (with respect to the order $\leqslant_{\mathcal O}$) is a graded poset with the rank function given by the dimension of the closure.
\epr
This proposition is a particular case of the following fact. Let $G$ be a connected, solvable group acting on an irreducible, affine variety $X$. Suppose that there are a finite number of orbits. Let $O$ be the
set of $G$-orbits on $X$. For $x,y\in O$ define $x\leqslant y$ if $x\subseteq\overline y$. Then $O$ is a graded poset.

This fact is given as an exercise in~\cite{R} (exercise 12, page 151) and can be proved using the proof of the theorem appearing of Section~8 of~\cite{R1}. (Note that in our case the Borel group is solvable, the variety of all symmetric matrices is irreducible as a vector space and the number of orbits is finite since there are only finitely many partial permutation.)

A natural problem is to find an
algorithm which calculates the $\dim\overline{\mathcal C_\pi}$
from a partial permutation matrix $\pi$ or from its rank-control
matrix $R(\pi)$. Here we present such an algorithm.

\bde\label{eqcount} Let $\pi$ be a partial permutation matrix and let $R(\pi)=(r_{ij})$ be its rank-control matrix.  Add an extra
$0$ row to $R(\pi)$, pushed one place to the left, i.e. assume that
$r_{0k}=0$ for each $0\leqslant k < n$.

 Denote
$$
\mathfrak D(\pi)=\#\left\{(i,j)\,|\,1\leqslant i\leqslant
j\leqslant n\quad\textrm{and}\quad r_{ij}=r_{i-1,j-1}\right\}.
$$\ede
There are two examples after the proof of Theorem~\ref{PosetRankFunction} (Examples~\ref{exd1} and~\ref{exd2}) and one more example in the end of the paper (Example~\ref{ex3}) of calculation of the parameter $\mathfrak D(\pi)$.
\begin{thm}\label{PosetRankFunction} Let $\pi$ be a partial $S_n$-involution. Then
$$
\dim\,\overline{\mathcal C_\pi}=\frac{n^2+n}{2}-\mathfrak D(\pi).
$$
\end{thm}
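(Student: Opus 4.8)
The plan is to compute $\dim\overline{\mathcal C_\pi}$ as the dimension of the orbit $\mathcal C_\pi$ itself (these agree, since the closure of an irreducible variety has the same dimension, and $\mathcal C_\pi$ is the image of the irreducible group $\mathbb B_n(\mathbb C)$), and to show that its codimension inside $\mathbb S(n,\mathbb C)$ is exactly $\mathfrak D(\pi)$. Writing the action as the genuine left action $B\cdot S=(B^{-1})^t S B^{-1}$, the orbit through $\pi$ is smooth, so $\dim\mathcal C_\pi$ equals the dimension of its tangent space at $\pi$. Since $\dim\mathbb S(n,\mathbb C)=\frac{n^2+n}{2}$, it suffices to prove that the codimension of this tangent space equals $\mathfrak D(\pi)$.

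First I would compute the tangent space. Differentiating $B\cdot S$ at $B=I$ in a direction $X$ in the Lie algebra $\mathfrak b$ of upper-triangular matrices gives $-(X^t\pi+\pi X)$, so the tangent space is $T_\pi=\{X^t\pi+\pi X : X\in\mathfrak b\}\subseteq\mathbb S(n,\mathbb C)$. I would then identify its codimension with the dimension of the orthogonal complement $N=T_\pi^{\perp}$ taken inside $\mathbb S(n,\mathbb C)$ with respect to the nondegenerate form $\langle A,B\rangle=\tr(AB)$. A short computation using cyclicity of the trace and the symmetry of $\pi$ and $Y$ gives $\tr\!\big(Y(X^t\pi+\pi X)\big)=2\sum_{i,j}(\pi Y)_{ij}X_{ij}$; since $X_{ij}$ ranges freely over $i\le j$, the annihilator condition becomes
$$N=\{Y\in\mathbb S(n,\mathbb C)\ :\ (\pi Y)_{ij}=0\ \text{for all}\ i\le j\},$$
i.e.\ $\pi Y$ is strictly lower-triangular. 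Thus $\operatorname{codim}\mathcal C_\pi=\dim N$.

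Next I would make $N$ explicit using that $\pi$ is a partial permutation. Let $\sigma$ denote the partial involution with $\pi_{i,\sigma(i)}=1$ for $i$ in its domain; then $(\pi Y)_{ij}=Y_{\sigma(i),j}$ when $i$ lies in the domain and $(\pi Y)_{ij}=0$ otherwise. The equations $(\pi Y)_{ij}=0$ for $i\le j$, together with the symmetry $Y_{ij}=Y_{ji}$, force the free upper-triangular entries $Y_{ij}$ ($i\le j$) to be exactly those for which neither $\big(i\in\operatorname{dom}\ \text{and}\ \sigma(i)\le j\big)$ nor $\big(j\in\operatorname{dom}\ \text{and}\ \sigma(j)\le i\big)$ holds. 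Hence $\dim N$ is the number of such pairs $(i,j)$.

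Finally I would match this count with $\mathfrak D(\pi)$. Since $r_{ij}$ counts the $1$'s of $\pi$ in the upper-left $i\times j$ block, inclusion--exclusion for partial permutations yields
$$r_{ij}-r_{i-1,j-1}=[\,i\in\operatorname{dom},\ \sigma(i)\le j\,]+[\,j\in\operatorname{dom},\ \sigma(j)\le i\,]-[\,\sigma(i)=j\,],$$
where a bracket involving $\sigma$ is read as $0$ when its index is outside the domain. A small case check (separating fixed points, $2$-cycles, and indices outside the domain) shows this difference vanishes precisely when both bracketed membership conditions fail --- which is exactly the freeness condition above. Therefore $\mathfrak D(\pi)=\dim N=\operatorname{codim}\mathcal C_\pi$, and $\dim\overline{\mathcal C_\pi}=\frac{n^2+n}{2}-\mathfrak D(\pi)$. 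The main obstacle is this last step: verifying the trace-form computation and, above all, carrying out the inclusion--exclusion bookkeeping so that the combinatorial vanishing condition $r_{ij}=r_{i-1,j-1}$ coincides on the nose with the linear-algebraic freeness condition; one also uses here that we work over $\mathbb C$, so that orbits are smooth and the tangent-space dimension equals the orbit dimension.
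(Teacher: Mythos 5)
Your proof is correct, but it takes a genuinely different route from the paper. The paper argues by induction on $n$: it realizes $\overline{\mathcal C_\pi}$ via its rank-control description, projects onto matrix fragments $V^{kn}$, and shows that each new coordinate $a_{kn}$ either satisfies a new, algebraically independent $(c+1)\times(c+1)$ minor equation (precisely when $r_{k,n}=r_{k-1,n-1}$) or is algebraically free (when $r_{k,n}>r_{k-1,n-1}$), so that $\mathfrak D(\pi)$ counts the independent equations cutting out the closure. You instead compute the dimension infinitesimally at the base point: the tangent space to the orbit is $\{X^t\pi+\pi X : X\in\mathfrak b\}$, its annihilator under the trace form on $\mathbb S(n,\mathbb C)$ is $\{Y : \pi Y\ \text{strictly lower-triangular}\}$, and the surviving free coordinates of $Y$ are matched bijectively with the equalities $r_{ij}=r_{i-1,j-1}$ via the inclusion--exclusion identity $r_{ij}-r_{i-1,j-1}=[\,i\in\operatorname{dom},\sigma(i)\le j\,]+[\,j\in\operatorname{dom},\sigma(j)\le i\,]-[\,\sigma(i)=j\,]$; I checked the case analysis and it is right (e.g.\ for $\pi$ the transposition matrix of Example~\ref{exd2} both computations give codimension $1$). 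Your argument buys a cleaner justification of independence: the orbit is a homogeneous space, hence smooth, so the tangent-space count is automatically the exact dimension and you avoid the paper's delicate claim that the successive minor equations are algebraically independent and that no further equations arise. What it costs is reliance on characteristic $0$ (separability of the orbit map, so that the image of the differential really is the full tangent space) and on the orbit--closure dimension equality, whereas the paper's inductive determinantal argument stays entirely within the explicit coordinate description of $\overline{\mathcal C_\pi}$ and generalizes verbatim to the non-symmetric and anti-symmetric variants discussed in the last section.
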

\begin{proof}
In this proof we use the notion of variety which corresponds to a fragment of a matrix, we put empty boxes instead of entries that we "cut" from a matrix. Any variety of $n\times n$ matrices is a (Zariski) closed subset of the vector space $\mathbb C^{n^2}$ and the variety which corresponds to a fragment of $n\times n$ matrix is the closure of the projection of the big variety on the corresponding subspace of $\mathbb C^{n^2}$. (Here the projection is the mapping $p:\mathbb C^n\rightarrow\mathbb C^{n-1}$ which "forgets" the last coordinate: $p\left(x_1,x_2,...,x_{n-1},x_n\right)=\left(x_1,x_2,...,x_{n-1}\right)$.) Denote by $V^{kn}$ the variety which is a projection of the certain variety $\overline{\mathcal C_\pi}$ of symmetric $n\times n$ matrices and corresponds to the fragment $\left[\begin{matrix}
a_{11} &a_{12} &\cdots &a_{1,k} &\cdots &\cdots &a_{1,n-1} &a_{1,n} \\
a_{12} &a_{22} &\cdots &a_{2,k} &\cdots &\cdots &a_{2,n-1} &a_{2,n} \\
\cdots &\cdots &\cdots&\cdots &\cdots &\cdots &\cdots &\cdots\\
a_{1,k-1} &a_{2,k-1} &\cdots &a_{k-1,k}&\cdots &\cdots&a_{k-1,n-1} &a_{k-1,n}\\
a_{1,k} &a_{2,k} &\cdots &a_{k,k}&\cdots &\cdots&a_{k,n-1}  &a_{k,n}\\
a_{1,k+1} &a_{2,k+1} &\cdots&a_{k,k+1} &\cdots &\cdots &a_{k+1,n-1} &\square\\
\cdots &\cdots &\cdots &\cdots &\cdots&\cdots &\cdots&\square\\
a_{1,n-1} &a_{2,n-1} &\cdots &\cdots &\cdots&\cdots&a_{n-1,n-1} &\square\\
a_{1,n} &a_{2,n} &\cdots&a_{k,n}&\square&\square&\square&\square
\end{matrix}\right]$. (For $V^{kn}$ the last non empty entry in the $n$-th column is in the row number $k$, all further positions in the $n$-th row and column are empty.) Consider also the variety $V^{k-1,n}$ which corresponds to $\left[\begin{matrix}
a_{11}  &\cdots &a_{1,k-1}  &\cdots &a_{1,n-1} &a_{1,n} \\
\cdots  &\cdots&\cdots  &\cdots &\cdots &\cdots\\
a_{1,k-1}  &\cdots &a_{k-1,k-1} &\cdots&a_{k-1,n-1} &a_{k-1,n}\\
a_{1,k}  &\cdots &a_{k-1,k} &\cdots&a_{k,n-1}  &\square\\
\cdots  &\cdots &\cdots &\cdots &\cdots&\square\\
a_{1,n-1}  &\cdots &\cdots  &\cdots&a_{n-1,n-1} &\square\\
a_{1,n}  &\cdots&a_{k-1,n} &\square&\square&\square
\end{matrix}\right]$. Note that since $V^{kn}$ and $ V^{k-1,n}$ are projections of the same variety $\overline{\mathcal C_\pi}$ and $V^{kn}$ has one more coordinate than $ V^{k-1,n}$, there are only two possibilities for their dimensions:  $\dim V^{kn}=\dim V^{k-1,n}$ or $\dim V^{kn}=\dim V^{k-1,n}+1$.

Now, let us start the course of the proof, by induction on $n$. For $n=1$ the statement is obviously true.

Consider an $n\times n$
partial $S_n$ involution $\pi_n$ and its rank-control matrix $R(\pi_{n})$.  Its upper-left $(n-1)\times (n-1)$
submatrix is the rank-control matrix $R(\pi_{n-1})$ of the
partial $S_{n-1}$-involution $\pi_{n-1}$ which is the upper-left
$(n-1)\times (n-1)$ submatrix of the partial $S_n$-involution
$\pi$. By the induction hypothesis, $
\dim\,\overline{\mathcal C_{\pi_{n-1}}}=\frac{n^2-n}{2}-\mathfrak D\left(\pi_{n-1}\right)$. Now we add the $n$-th column to the partial involution matrix (the $n$-th row is the same and does not provide any new information  because we deal with the symmetric matrices) and consider the $n$-th column of $R(\pi)$. (We also add the $n$-th row but since our matrices are symmetric it suffices to understand only what happens to the dimension when we add the $n$-th column.) We added $n$ new coordinates to the variety $\overline{\mathcal C_{\pi_{n-1}}}$ and we have to show that
$$
\dim\,\overline{\mathcal C_{\pi}}=\dim\,\overline{\mathcal C_{\pi_{n-1}}}+n-\#\left\{(i,n)\,|\,1\leqslant i\leqslant n\quad\textrm{and}\quad r_{in}=r_{i-1,n-1}\right\}\,,\eqno(*)
$$ i.e. not all the $n$ coordinates that we added make the dimension greater but only those of them  for which there is an inequality in the corresponding place of the certain diagonal of the rank-control matrix. The equality $(*)$ implies the statement of our theorem since $\frac{n^2-n}{2}+n=\frac{n^2+n}{2}$ and
$$
\mathfrak D\left(\pi\right)=\mathfrak D\left(\pi_{n-1}\right)+\#\left\{(i,n)\,|\,1\leqslant i\leqslant n\quad\textrm{and}\quad r_{in}=r_{i-1,n-1}\right\}\,.
$$
Obviously, if $r_{1,n}=0$, then $a_{1,n}=0$ for any
$A=\left(a_{ij}\right)^n_{i,j=1}\in\overline{\mathcal C_\pi}$, this
itself is a polynomial equation which makes the dimension lower by
1, while if $r_{1,n}=1$ it means that the rank of the first row is
maximal and therefore, no equation. (In other words,  the dimension of the variety $V^{1n}$ which corresponds to $\left[\begin{matrix}
a_{11} &a_{12} &\cdots &a_{1,n-1} &a_{1,n}\\
a_{12} &a_{22} &\cdots &a_{2,n-1} &\square\\
\cdots &\cdots &\cdots &\cdots &\square\\
a_{1,n-1} &a_{2.n-1} &\cdots &a_{n-1,n-1}  &\square\\
a_{1,n} &\square &\square &\square &\square
\end{matrix}\right]$ is greater by one than the of the variety $V^{0n}$ which corresponds to $\left[\begin{matrix}
a_{11} &a_{12} &\cdots &a_{1,n-1}\\
a_{21} &a_{22} &\cdots &a_{2,n-1} \\
\cdots &\cdots &\cdots &\cdots \\
a_{n-2,1} &a_{n-2,2} &\cdots &a_{n-2,n-1} \\
a_{n-1,1} &a_{n-1,2} &\cdots &a_{n-1,n-1}
\end{matrix}\right]$ when $r_{1,n}=1$ and they have equal dimensions when $r_{1,n}=0$.) Now move down along the $n$-th
column of $R(\pi)$. Again by induction, this time the induction is
on the number of row $k$, assume that for each $1\leqslant i\leqslant k-1$ the dimensions of $V^{in}$ and $V^{i-1,n}$ ar equal iff
$r_{i-1,n-1}=r_{i,n}$ and  $\dim V^{in}=\dim V^{i-1,n}+1$ iff $r_{i-1,n-1}<r_{i,n}$.
First, let $r_{k-1,n-1}=r_{k,n}=c$. Consider a matrix
$A=\left(a_{ij}\right)_{i,j=1}^n\in\overline{\mathcal C_\pi}$ and
consider its upper-left $(k-1)\times(n-1)$ submatrix
$\left[\begin{matrix}
a_{11} &a_{12} &\cdots &a_{1,n-1}\\
a_{21} &a_{22} &\cdots &a_{2,n-1}\\
\cdots &\cdots &\cdots &\cdots \\
a_{k-1,1} &a_{k-1,2} &\cdots &a_{k-1,n-1}
\end{matrix}\right]$. Using the notation introduced in
Proposition~\ref{ulr}, we denote this submatrix as $A_{k-1,n-1}$.  If $c=0$, then $rank A_{kn}=0$, so $A_{kn}$ is a zero matrix and $\dim V^{in}=\dim V^{i-1,n}=0$. Let $c\neq0$.
Since $ rank\left(A_{k-1,n-1}\right)=c$, we can take $c$ linearly independent columns
$\left[\begin{matrix}a_{1,j_1}\\a_{2,j_1}\\ \cdots\\
a_{k-1,j_{1}}\end{matrix}\right]$ , ... , $\left[\begin{matrix}a_{1,j_c}\\a_{2,j_c}\\ \cdots\\
a_{k-1,j_c}\end{matrix}\right]$ which span its column space. Now take only linearly independent rows of the $(k-1)\times c$ matrix $\left[\begin{matrix}
a_{1,j_1} &\cdots &a_{1,j_c} \\
a_{2,j_1} &\cdots &a_{2,j_c} \\
\cdots &\cdots &\cdots\\
a_{k-1,j_1} &\cdots &a_{k-1,j_c}\end{matrix}\right]$ to get a nonsingular $c\times c$ matrix $\left[\begin{matrix}
a_{i_1,j_1} &\cdots &a_{i_1,j_c} \\
a_{i_2,j_1} &\cdots &a_{i_2,j_c} \\
\cdots &\cdots &\cdots\\
a_{i_c,j_1} &\cdots &a_{i_c,j_c}\end{matrix}\right]$. The
equality $r_{k-1,n-1}=r_{k,n}=c\leqslant k-1$ implies that any $(c+1)\times(c+1)$ minor of the matrix $A_{kn}$ is zero, in particular
$\det\left[\begin{matrix}
a_{i_1,j_1} &\cdots &a_{i_1,j_c} &a_{i_1,n}\\
a_{i_2,j_1} &\cdots &a_{i_2,j_c} &a_{i_2,n}\\
\cdots &\cdots &\cdots &\cdots\\
a_{i_c,j_1} &\cdots &a_{i_c,j_c} &a_{i_c,n}\\
a_{k,j_1} &\cdots &a_{k,j_c} &a_{k,n}
\end{matrix}\right]=0$, which is a polynomial equation. This equation is algebraically independent of the similar equations obtained for $1\leqslant i\leqslant k-1$ because it involves the "new" variable -- the entry $a_{k,n}$. It indeed
 involves the entry $a_{k,n}$ since $\det\left[\begin{matrix}
a_{i_1,j_1} &\cdots &a_{i_1,j_c} \\
a_{i_2,j_1} &\cdots &a_{i_2,j_c} \\
\cdots &\cdots &\cdots \\
a_{i_c,j_1} &\cdots &a_{i_c,j_c}
\end{matrix}\right]\neq 0$. This equation means that the variable $a_{k,n}$ is not independent of the coordinates of the variety $V^{k-1,n}$, and therefore $\dim V^{k-1,n}=\dim V^{kn}$.

Now  let $r_{k-1,n-1}<r_{k,n}=c$, and we have to show that in this
case the variable $a_{nk}$ is independent of the coordinates of $V^{k-1,n}$, in other words, we have to show that there is no new equation. Consider the fragment
$\left[\begin{matrix}
r_{k-1,n-1} &r_{k-1,n}\\
r_{k-1,n} &r_{k,n}
\end{matrix}\right]$. There are four possible cases:
\begin{align*}
\left[\begin{matrix}
r_{k-1,n-1} &r_{k-1,n}\\
r_{k-1,n} &r_{k,n}
\end{matrix}\right]=&
\left[\begin{matrix}
c-1 &c-1\\
c-1  &c
\end{matrix}\right]\quad\textrm{or}\quad
\left[\begin{matrix}
c-2 &c-1\\
c-1  &c
\end{matrix}\right]
\quad\textrm{or}\\
& \left[\begin{matrix}
c-1 &c\\
c-1  &c
\end{matrix}\right]\quad\textrm{or}\quad
\left[\begin{matrix}
c-1 &c-1\\
c  &c
\end{matrix}\right]\,\,.
\end{align*}
The equality $r_{k,n}=c$ implies that each $(c+1)\times(c+1)$ minor  of $A_{kn}$ is equal to zero, but we shall see that each such equation is not new, i.e. it is implied by  the equality $r_{k,n-1}=c-1$ or by the equality $r_{k-1,n}=c-1$. In the first three of above four cases we decompose the $(c+1)\times(c+1)$ determinant $\det\left[\begin{matrix}
\cdots &\cdots   \\
\cdots  &a_{k,n}
\end{matrix}\right]$ using the last column. Since in all these cases $r_{k,n-1}=c-1$, each $c\times c$ minor of this decomposition (i.e. each $c\times c$ minor of $A_{k,n-1}$) is zero and therefore, this determinant is zero. In the fourth case we get the same if we decompose the determinant using its last row instead of the last column: since $r_{k-1,n}=c-1$, all the $c\times c$ minors of this decomposition (i.e. all $c\times c$ minor of $A_{k-1,n}$) are zeros and thus, our $(c+1)\times(c+1)$ determinant equals to zero. So there is no algebraic dependence between $a_{kn}$ and the coordinates of $V^{k-1,n}$. Therefore, $\dim V^{kn}=\dim V^{k-1,n}+1$. The case $k=n$ is the same as other cases when $k\leqslant n-1$.
  The proof is completed.
\end{proof}

We end this section with two examples:

\begin{exa}\label{exd1}
 If $\pi=Id$ then clearly $\mathfrak D(\pi)=0$ and $\dim\,\overline{\mathcal C_\pi}=\frac{n^2+n}{2}$. Indeed, $\overline{\mathcal C_{Id}}$ is the variety of all symmetric $n\times n$ matrices and thus its dimension is equal to $n^2$ minus the number of equations of the type $a_{ij}=a_{ji}$, i.e. $\dim\,\overline{\mathcal C_{Id}}=n^2-\frac{n^2-n}{2}=\frac{n^2+n}{2}$.
\end{exa}
\begin{exa}\label{exd2}
 Taking $\pi=\left[\begin{matrix}0 &1 &0\\
1 &0 &0\\
0 &0 &1 \end{matrix}\right]$ with $R(\pi)=\left(\begin{matrix}
0 &1 &1\\
1 &2 &2\\
1 &2 &3 \end{matrix}\right)$ we have diagonals (with added zeros)
$(0,0,2,3)$, $(0,1,2)$ and $(0,1)$. We see that here we have one
place in the beginning of the main diagonal where
$r_{11}=r_{00}=0$ while in all other places $r_{ij}$ is strictly
greater than $r_{i-1,j-1}$. Therefore, $\mathfrak D(\pi)=1$ and $\dim\,\overline{\mathcal C_\pi}=\frac{3^2+3}{2}-1=5$. Indeed,
$$\overline{\mathcal C_\pi}=\left\{(a_{ij})^3_{i,j=1}\,\,|\,\,a_{12}=a_{21},a_{13}=a_{31},a_{23}=a_{32}\,\,\textrm{and}\,\,\,a_{11}=0\right\}.$$ The dimension of the vector space of all $3\times3$ matrices is $3^2=9$ and here we have four algebraically independent equations, so the dimension is 5.
\end{exa}

\section{Another characterization of the parameter $\mathfrak D(\pi)$}
\noindent
Obviously, an $n\times n$ partial involution matrix $\pi$ can be described
uniquely by the pair $\left(\tilde{\pi},\left\{i_1,...,i_k\right\}\right)$, where $n-k$
is the rank of the matrix $\pi$, $\tilde{\pi}\in S_{n-k}$ such
that $\tilde{\pi}^2=Id$ is the regular (not partial) involution of
the symmetric group $S_{n-k}$ and the integers $i_1,...,i_k$ are
the numbers of zero rows (columns) in the matrix $\pi$.

The following theorem is a generalization of the formula for the rank function of the Bruhat poset of the involutions of $S_n$ given by Incitti in~\cite{I}. It is indeed the rank function because we already know that the rank function is the dimension (Proposition~\ref{rankdimrenner}) and the dimension is determined by the parameter $\mathfrak D$ (Theorem~\ref{PosetRankFunction}).
\begin{thm}\label{genincitti}
Following Incitti, denote by $Invol(G)$ the set of all involutions
in the group $G$. Then for a partial permutation
$\pi=\left(\tilde{\pi},\left\{i_1,...,i_k\right\}\right)$, where
$\tilde{\pi}\in Invol(S_{n-k})$ and the integers $i_1,...,i_k$ are
the numbers of zero rows (columns) in the matrix $\pi$ is: $$
\mathfrak
D(\pi)=\frac{exc(\tilde{\pi})+inv(\tilde{\pi})}{2}+\sum_{t=1}^{k}(n+1-i_t)
$$

In other words, $\mathfrak D(\pi)$ equals to the length of
$\tilde{\pi}$ in the poset of the involutions of the group $S_{n-k}$
plus the sum of the numbers of zero rows of the matrix $\pi$, where
the numbers are taken in the opposite order, i.e. the $n$-th row is
labeled by 1, the $(n-1)$-th row is labeled by 2,..., the first row
is labeled by $n$.

\end{thm}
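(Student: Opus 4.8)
The plan is to evaluate $\mathfrak D(\pi)$ directly from Definition~\ref{eqcount} by first turning the numerical condition $r_{ij}=r_{i-1,j-1}$ into a combinatorial condition on the $1$'s of $\pi$, and then splitting the resulting count into a ``support part'' that will produce $\frac{exc(\tilde\pi)+inv(\tilde\pi)}{2}$ and a ``zero part'' that will produce $\sum_{t=1}^k(n+1-i_t)$. Since $r_{ij}$ is the number of $1$'s in the upper-left $i\times j$ block, the difference $r_{ij}-r_{i-1,j-1}$ counts the $1$'s lying in the hook consisting of row $i$ (columns $1,\dots,j$) together with column $j$ (rows $1,\dots,i-1$). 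Writing $\pi$ also for the underlying partial function, I would record the resulting equivalence: for $i\le j$,
$$r_{ij}=r_{i-1,j-1}\iff\big(\text{row }i\text{ is zero or }\pi(i)>j\big)\ \text{and}\ \big(\text{column }j\text{ is zero or }\pi(j)\ge i\big).$$

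With $Z=\{i_1,\dots,i_k\}$ the set of zero rows (equal to the set of zero columns by symmetry) and the remaining indices forming the support on which $\pi$ restricts, after an order-preserving relabeling, to the regular involution $\tilde\pi$, I would partition the pairs $1\le i\le j\le n$ according to whether both indices lie in the support or at least one lies in $Z$, so that $\mathfrak D(\pi)$ is the sum of the two parts. The first part equals $\mathfrak D(\tilde\pi)$: for support pairs the equivalence above becomes exactly $\pi(i)>j$ and $\pi(j)\ge i$, and since this depends only on order relations among support elements it is preserved by the relabeling and equals the value of the same parameter for the genuine involution $\tilde\pi$.

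For the zero part I would split further: pairs with $i=z\in Z$ (then $j\ge z$, contributing iff $j\in Z$ or $\pi(j)\ge z$) and pairs with $j=z\in Z$, $i$ in the support (contributing iff $\pi(i)>z$). The first kind gives, for each $z$, the full range $\{z,\dots,n\}$ of size $n+1-z$ minus the support indices $j>z$ with $\pi(j)<z$; the second kind gives the support indices $i<z$ with $\pi(i)>z$. The key observation is that both correction terms are counted by the $2$-cycles $(a,b)$ of $\pi$ straddling $z$, i.e.\ with $a<z<b$, seen once through their right endpoint and once through their left endpoint. Hence the corrections cancel and the zero part collapses to $\sum_{z\in Z}(n+1-z)=\sum_{t=1}^k(n+1-i_t)$.

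It then remains to prove the clean identity $\mathfrak D(w)=\frac{exc(w)+inv(w)}{2}$ for a regular involution $w$, i.e.\ that $\#\{(i,j):i\le j,\ w(i)>j,\ w(j)\ge i\}$ equals the right-hand side. Here I would first note that $w(j)\ge i$ may be replaced by $w(j)>i$, since $w(j)=i$ forces $w(i)=j$ and then $w(i)>j$ fails, so the boundary case never contributes. Writing $U=\{(i,j):w(i)>j,\ w(j)>i\}$, which is symmetric under $(i,j)\mapsto(j,i)$ and whose diagonal consists of the $exc(w)$ excedance positions, I get $\mathfrak D(w)=\frac{|U|+exc(w)}{2}$. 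Finally the map $(i,j)\mapsto(i,w(j))$, which is an involution precisely because $w$ is, carries $U$ bijectively onto the inversion set $\{(i,j'):i<j',\ w(i)>w(j')\}$, so $|U|=inv(w)$ and the identity follows. The main obstacle is the zero-part bookkeeping: one must set up the two tallies carefully enough to see that the straddling-arc corrections match and cancel, whereas the support identity, once the substitution $j\mapsto w(j)$ is spotted, is immediate.
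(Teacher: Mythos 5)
Your proof is correct, but it follows a genuinely different route from the paper's. The paper proves the formula by induction on $n$: it peels off the last row and column, computes the increment $\mathfrak{D}(\pi_n)-\mathfrak{D}(\pi_{n-1})$ from the $n$-th column of the rank-control matrix, and matches it against the increment of the right-hand side through a three-way case analysis on $i_1=\pi_n(n)$ (zero column, $0<i_1<n$, fixed point $i_1=n$), tracking how $inv$, $exc$ and the zero-row labels change at each step. You instead give a direct, global count: since ranks of submatrices of a partial permutation matrix are just counts of $1$'s, you convert $r_{ij}=r_{i-1,j-1}$ into the hook condition (row $i$ zero or $\pi(i)>j$) and (column $j$ zero or $\pi(j)\geqslant i$), split the pairs into a support part and a zero part, cancel the straddling-arc corrections in the zero part to get $\sum_{t}(n+1-i_t)$, and then establish $\mathfrak{D}(w)=\frac{exc(w)+inv(w)}{2}$ for regular involutions by symmetrizing over the diagonal and mapping $(i,j)\mapsto(i,w(j))$ onto the inversion set. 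Your argument is self-contained and bijective, and in particular it explains transparently where the factor $\frac{exc+inv}{2}$ comes from (it reproves Incitti's rank formula rather than quoting its inductive shadow), whereas the paper's induction is shorter to state because it reuses the row-by-row machinery already set up for Theorem~\ref{PosetRankFunction} and only has to control one new column at a time. Both are valid; yours trades the case analysis for some bookkeeping in the zero part, which you have set up correctly.
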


\begin{proof}

We prove by induction on $n$. The claim is trivial for $n=1$ so
assume that it is true for $n-1$ and let $\pi_n$ be a partial
permutation of order $n$. Let $\pi_{n-1}$ be the sub matrix of
$\pi_n$ consisting of the first $n-1$ rows and columns. We introduce some notations:

\begin{itemize}
\item  Let $\pi_n(n)=i_1$ . This means that the digit $1$ of column $n$ appears at row $i_1$. If $i_1=0$
then column $n$ is a zero column.
\item Denote by $O_n$, ($O_{n-1}$) the set of zero rows of $\pi_n$,
($\pi_{n-1}$) respectively.
\item Let $\pi_k$ be a partial permutation of order $k$. For each $1 \leq i \leq k$, the number of
zero columns of $\pi_k$, until column $i$
(including $i$ itself in case it is a zero column) is denoted by $o_k(i)$.

\item Let $$\Delta_n=inv(\tilde{\pi}_n)+exc(\tilde{\pi}_n)-(inv(\tilde{\pi}_{n-1})+exc(\tilde{\pi}_{n-1})).$$

\item If $\pi$ is a partial permutation on $n$ elements then we denote $$sup(\pi)=|\{1 \leq i \leq n \mid \pi(i)\neq 0\}\}|$$
so $\tilde{\pi} \in S_{\sup(\pi)}$.

\item For each partial involution $\pi$ of order $k$ and $1 \leq i \leq k$ such that $\pi(i) \neq 0$ one has:
$\pi(i)=j$ if and only if $\tilde{\pi}(i-o_k(i))=j-o_k(j)$.

\item For a partial involution $\pi$
$$exc(\tilde\pi)=\frac{sup(\pi)-fix(\pi)}{2}$$ where $fix(\pi)=|\{1 \leq i
\leq sup(\pi) \mid \pi(i)=i\}|$.

\end{itemize}

 It is sufficient to prove that
$$\mathfrak{D}(\pi_n)-\mathfrak{D}(\pi_{n-1})=\frac{\Delta_n}{2} +
\sumlim_{i \in O_n}(n+1-i)- \sumlim_{i \in O_{n-1}}(n-i)$$

We calculate first the L.H.S.

Recall that
$$\mathfrak{D}(\pi_n)-\mathfrak{D}(\pi_{n-1})=\{i \mid 1 \leq i \leq
n, r_{i_n}=r_{i-1,n-1}\}.$$

 Let $R(\pi_n)=(r_{k,l})_{1 \leq k,l \leq n}$ be the rank control
matrix of $\pi_n$ and let $i \in \{1,\dots ,n-1\}$. If $i \notin
O_{n-1}$ then $r_{i,n} \neq r_{i-1,n-1}$.  If $ i \in O_{n-1} -
O_{n}$
 then $\pi_n(n)=i$ and we have again $r_{i,n} \neq r_{i-1,n-1}$. If
 $i \in O_{n-1} \cap O_n$ then $r_{i,n}=r_{i-1,n-1}$ if and only if the
 digit $1$ of column $n$ of $\pi_n$ appears after the
row  $i$ or if is does not appear at all, i.e.
 $\pi_n(n)>i$ or $\pi_n(n)=0$. Thus

 $$
\mathfrak{D}(\pi_n)-\mathfrak{D}(\pi_{n-1})=
\left\{
\begin{array}{cc}
|\{i \mid \pi_n(n)>i\} \cap O_n | =o_n(i_1)  & {i_1 \neq 0} \\
  |O_n| & {i_1=0}
\end{array}
\right.
$$

Before calculating the R.H.S., note that if $\pi \in S_k$ is an involution then $$exc(\pi)=\frac{k-fix(\pi)}{2}$$
where $fix(\pi)=|\{1 \leq i \leq k \mid \pi(i)=i\}|$.
 We distinguish between three cases according to the value of $i_1=\pi_n(n)$:

\begin{enumerate}

\item $i_1=0$

In this case we have $\tilde{\pi}_{n}=\tilde{\pi}_{n-1}$ so
$\Delta_n=0$. We also have $|O_n|=|O_{n-1}|+1$ so that the R.H.S is
just $|O_n|$ as required.

\item $0<i_1 < n$.  Note first that in this case $fix(\tilde{\pi}_{n-1})=fix(\tilde{\pi}_{n})$. (Indeed, let
$k \in \{n-1,n\}$ and let $i \in \{1,\dots, n\}$ be such that $\pi_k(i)=j \neq 0$. Then  $i-o_k(i)$ is a fixed point of
$\tilde{\pi}_k$ if and only if $i-o_k(\i)=j-o_k(j)$. If (without loss of generality) $i<j$ then the number of zero columns between
$i$ and $j$ is equal to the total number of columns between $i$ and $j$ in $\pi_k$ which implies that $j=0$, a contradiction.
Thus we must have $i=j$ so $i-o_k(i)$ is a fixed point of $\tilde{\pi}_k$ if and only if $i$ is a fixed point of $\pi_k$. The only
difference between $\pi_{n}$ is at $i_1$ which can't be a fixed point, hence $fix(\tilde{\pi}_{n-1})=fix(\tilde{\pi}_n)$).

It is easy to see that $sup(\pi_{n})=sup(\pi_{n-1})+2$ and thus $exc(\pi_n)-exc(\pi_{n-1})=1$.

We turn now to the calculation of
$inv(\tilde{\pi}_n)-inv(\tilde{\pi}_{n-1})$.
 When we pass from $\tilde{\pi}_{n-1}$ to
$\tilde{\pi}_{n}$, we put $1$ in places $(n-o_n(n),i_1-o_n(i_1))$
and $(i_1-o_n(i_1),n-o_n(n))$. The columns after the column
$i_1-o_n(i_1)$ contribute
$n-o_n(n)-(i_1-o_n(i_1))=n-o_n(n)-i_1+o_n(i_1)$ inversions while the
rows after row $i_1-o_n(i_1)$ contribute $n-o_n(n)-(i_1-o_n(i_1))$
inversions. Since the $1$ in place $(n-o_n(n),i_1-o_n(i_1))$ was
counted twice, we have:

$$inv(\tilde{\pi}_n)-inv(\tilde{\pi}_{n-1})=2(n-o_n(n)-i_1+o_n(i_1))-1.$$

Now, we have:
\begin{align*}
\frac{\Delta_n}{2}&=\frac{exc(\tilde{\pi}_n)-exc(\tilde{\pi}_{n-1})+inv(\pi_n)-inv(\pi_{n-1})}{2}=\\
&=\frac{1+2\left(n-o_n(n)-i_1+o_n(i_1)\right)-1}{2}=\\
&=n-o_n(n)-i_1+o_n(i_1)=n-|O_n|-i_1+o_n(i_1)\,.
\end{align*}
Now, since $0< i_1 <n$, we have  $|O_n|=|I_{n-1}|$, $\sumlim_{i \in
I_{n-1}}{i}-\sumlim_{i \in O_n}{i}=i_1$ and we are done.

\item $i_1=n$:  The following facts are easy to verify in this case:
$$inv(\tilde{\pi}_{n})-inv(\tilde{\pi}_{n-1})=exc(\tilde{\pi}_{n})-exc(\tilde{\pi}_{n-1}),$$
$$|O_n|=|O_{n-1}|=|o_n(n)|=|o_n(i_1)|,$$ and from here we easily get
our result.

\end{enumerate}

\end{proof}

\begin{exa}\label{ex3}
$$
\pi=\left[\begin{matrix}
0 &0 &0 &1 &0 &0\\
0 &0 &0 &0 &1 &0\\
0 &0 &0 &0 &0 &0\\
1 &0 &0 &0 &0 &0\\
0 &1 &0 &0 &0 &0\\
0 &0 &0 &0 &0 &0
\end{matrix}\right]\,\,\,
R(\pi)=\left(\begin{matrix}
0 &0 &0 &1 &1 &1\\
0 &0 &0 &1 &2 &2\\
0 &0 &0 &1 &2 &2\\
1 &1 &1 &2 &3 &3\\
1 &2 &2 &3 &4 &4\\
1 &2 &2 &3 &4 &4
\end{matrix}\right)
$$
The diagonals with added zeros are $(0,0,0,0,2,4,4)$,
$(0,0,0,1,3,4)$, $(0,0,1,2,3)$, $(0,1,2,2)$, $(0,1,2)$ and
$(0,1)$. Thus, $\mathfrak D(\pi)=8$.

From the other hand, here we have $n=6$,
$$
\tilde{\pi}=\left[\begin{matrix}
0 &0 &1 &0 \\
0 &0 &0 &1 \\
1 &0 &0 &0 \\
0 &1 &0 &0
\end{matrix}\right]=(3412)\in
S_4\,\,,\,\,\textrm{and zero rows
of}\,\,\pi\,\,\textrm{are}\,\,\,\left\{i_1,i_2\right\}=\{3,6\}\,.
$$
So,
$$
\mathfrak
D(\pi)=\frac{exc(\tilde{\pi})+inv(\tilde{\pi})}{2}+\sum_{t=1}^{2}(6+1-i_t)=
\frac{2+4}{2}+4+1=8\,.
$$

\bco\label{incittigrfr}
The Bruhat poset of regular (not partial) involutions of $S_n$ is a graded poset with the rank function given by the formula
 $$\mathfrak
D(\sigma)=\frac{exc(\sigma)+inv(\sigma)}{2}\,\,,$$
where $\sigma\in Invol(S_n)$.
\eco
\begin{proof}
The fact that the Bruhat poset of involutions of $S_n$ is graded follows from the fact that this poset is an interval (more precisely, a reversed interval) in the poset of partial involutions (since the $(n,n)$-th entry in the rank-control matrix of any regular involution is $n$ because the matrix of $\sigma\in Invol(S_n)$ is invertible) and the fact that the poset of partial involutions is graded (Proposition~\ref{rankdimrenner}). The formula for the rank function follows from Theorem~\ref{genincitti} since the matrix of $\sigma\in Invol(S_n)$ is invertible and doesn't have zero rows or columns.
This is exactly the rank function of the Bruhat poset of the involutions of $S_n$ given by Incitti in~\cite{I}.
\end{proof}

\section{The Non-symmetric Case}
Consider the the double cosets  $\mathcal B_\pi=\left\{B_1^t\pi B_2\right\}$ where $\pi\in R_n$ (i.e. $\pi$ is a partial permutation and $B_1,B_2\in\mathbb B_n(\mathbb C)$. (Considering only $\pi\in S_n$ we get a version of the Bruhat decomposition for $GL_n(\mathbb C)$). Similarly to the definition of $\mathfrak D$ (Definition~\ref{eqcount}) we can define a parameter $\mathfrak E(\pi)$ for a not necessarily symmetric $\pi$ which counts all the equalities in all the diagonals of the rank-control matrix of $\pi$, not only in its upper triangle as we did for the symmetric case. Then we have the following formula for the dimension:
$$\dim\overline{\mathcal B_\pi}=n^2-\mathfrak E(\pi)\,.$$
Comparing this formula with the formula of Theorem~\ref{PosetRankFunction} we see that here we have $n^2$ instead of $\frac{n^2+n}{2}$ because the dimension of the variety of all $n\times n$ matrices is $n^2$. The proof is same as he proof of Theorem~\ref{PosetRankFunction} with obvious changes. Another case where the similar to $\mathfrak D$ parameter is used in the similar way is discussed in~\cite{C}, where the poset of Borel congruence classes of anti-symmetric
matrices is presented. In~\cite{C} it is shown that congruence B-orbits of anti-symmetric matrices can be indexed by involutions of the symmetric group and the parameter which is analogous to $\mathfrak D$ (denoted as $\mathfrak A$ in~\cite{C}) to counts the equalities in the diagonals of the "strict" upper triangle of the rank-control matrix (i.e the upper triangle without the main diagonal). There is the a formula (Theorem~5.6 of~\cite{C}) for the dimension of the closure of the congruence B-orbit $\mathcal A_\pi$ which corresponds to the involution $\pi\in S_n$:
$$
\dim\,\overline{\mathcal A_\pi}=\frac{n^2-n}{2}-\mathfrak A(\pi).
$$
The number $\frac{n^2-n}{2}$ is the dimension of the variety of all $n\times n$ anti-symmetric matrices.

So, the counting of the equalities in the diagonals of the rank-control matrix is a unified way to find the rank function of the (generalized) Bruhat poset, both in the general and in the symmetric case and also for the poset of congruence B-orbits of anti-symmetric matrices, i.e. the same must be done to compute the rank function of the Bruhat poset all (partial) permutations and the Bruhat poset (partial) involutions. The difference between these three cases is very natural: in the general case we consider all the diagonals of the rank-control matrix, while in the symmetric and anti-symmetric case we consider only the upper triangle (with or without the main diagonal respectively), since a symmetric matrix is completely determined by its upper triangle,  and an anti-symmetric matrix has zeros in its main diagonal and is completely determined by its "strict" upper triangle.

If we consider the double cosets $\left\{B_1\pi B_2\right\}$ (multiplying $\pi$ from both sides by upper-triangular matrices as L.~Renner does in his works~\cite{R}, \cite{R1} and~\cite{CR}) we also can define a parameter analogous to $\mathfrak E$ or $\mathfrak D$, but in this case we must go along the diagonals from south-west to the north-east (from the lower-left corner to the upper-right corner) of the rank-control matrix. In this case case the number of such equalities is equal to the dimension. The proof also is similar to the proof of Theorem~\ref{PosetRankFunction}.
\end{exa}

\noindent
{\bf Acknowledgements.} It is the pleasure for both authors  to thank Dr. Anna Melnikov whose work~\cite{M} gave the starting point for this our paper. Also we are grateful to Prof. Ron M. Adin for many very helpful discussions. We would like to express the special gratitude to Prof. Lex Renner for providing us the very useful information about the Bruhat poset and answering our numerous questions. Also we are grateful to Prof. Yuval Roichman and to Prof. Uzi Vishne for helpful discussions. The second author would like to thank the Faculty of Mathematics, Technion, Israel, and the Department of Mathematics, University of Geneva, Switzerland, for the hospitality and for the financial  support.

\end{document}